\newtheorem{theorem}{Theorem}[section]
\newtheorem{proposition}[theorem]{Proposition}
\theoremstyle{definition}
\numberwithin{equation}{section}
\numberwithin{figure}{section} 
\numberwithin{table}{section}
\DeclareMathOperator{\PD}{{\rm PD}}
\newcommand\st{{\rm st}}
\newcommand\dist{{\rm dist}}
\newcommand \cqfd{\unskip\kern 6pt\penalty 500
\raise -2pt\hbox{\vrule\vbox to10pt{\hrule width 4pt
\vfill\hrule}\vrule}\par}                 
\def\adots{\mathinner{\mkern2mu\raise1pt\hbox{.}
\mkern3mu\raise4pt\hbox{.}\mkern1mu\raise7pt\hbox{.}}}
\def\hfl#1{\frac{\buildrel{#1}}{{\hbox to 12mm{\rightarrowfill}}}}
\def \\R^n \times \R^n
\rightarrow \R{\mathop{\R^n \times \R^n
\rightarrow \R}}
 \newcommand\R {{\mathbb R}}
\newcommand\RR {{\mathbb R}} \newcommand\RP {{\mathbb R}{\mathbb P}}
 \newcommand\Z {{\mathbb Z}}
\newcommand{\smat}[4]
{{\(\!\!\begin{array}{cc}{#1}\!&\!{#2}\\begin{equation*}-0.1cm]{#3}\!&\!{#4}\end{array}\!\!\)}}
\long\def\forget#1\forgotten{} %
\long\def\forgett#1\forgottent{} %
\def\circ{\mathchoice%
 {\mathrel{\raise 1pt\hbox{$\scriptstyle\mathchar"020E$}}}
 {\mathrel{\raise 1pt\hbox{$\scriptstyle\mathchar"020E$}}}
 {\mathrel{\raise 1pt\hbox{$\scriptscriptstyle\mathchar"020E$}}}
 {}
}
\newcommand{\nc}{\newcommand} \nc{\on}{\operatorname}
\nc{\df}{\on{\it df}}
\nc{\conf}{\on{conf}}
\nc{\spt}{\on{spt}}
\nc{\norm}[1]{\| #1 \|}
\nc{\parallelleer}{\norm{\ }} %{\parallel \: \parallel}
\nc{\parallelh}{\norm h} %{\parallel\!h\!\parallel}
\nc{\parallelk}{\norm k} %{\parallel\!k\!\parallel}
\nc{\parallelx}{\norm x} %{\parallel\!x\!\parallel}
\nc{\parallelhrr}{\norm {h_\RR}} %{\parallel\!h_{\rr}\!\parallel}
\nc{\parallelom}{\norm \omega} %{\parallel\!\omega\!\parallel}
\nc{\parallelomij}{\norm {\omega_{i_j}}} %{\parallel\!\omega_{i_j}\!\parallel}
\nc{\parallelomx}{\norm {\omega_{x}}} %{\parallel\!\omega_{x}\!\parallel}
\nc{\parallelpi}{\norm \pi} %{\parallel\!\pi\!\parallel}
\nc{\parallelalf}{\norm \alpha} %{\parallel\!\alpha\!\parallel}
\nc{\parallelalfs}{\norm {\alpha_s}} %{\parallel\!\alpha_s\!\parallel}
\nc{\parallelalfi}{\norm {\alpha_i}} %{\parallel\!\alpha_i\!\parallel}
\nc{\parallelalfij}{\norm {\alpha_{i_j}}} %{\parallel\!\alpha_{i_j}\!\parallel}
\nc{\parallelbeta}{\norm \beta} %{\parallel\!\beta\!\parallel}
\nc{\parallelbetat}{\norm {\beta_t}} %{\parallel\!\beta_t\!\parallel}
\nc{\parallelhcapalf}{\norm {h \cap \alpha}} %{\parallel\!h\cap\alpha\!\parallel}
\nc{\parallelPDralf}{\norm {\PD_\RR(\alpha)}} %{\parallel\!\PD_{\rr}(\alpha)\!\parallel}
\nc{\strichleer}{| \  |}
\nc{\NN}{\mathbb N}
\nc{\rr}{\mbox{$\scriptstyle\mathbb R$}}
\nc{\dF}{{\it dF}} 
\nc{\DF}{{\it DF}} 
\nc{\ds}{{\it ds}} 
\nc{\dvol}{{\it dvol}}
\nc{\grad}{{\rm grad}} 
\nc{\strichw}{\|\omega\|} 
\nc{\strichwx}{|\omega_x|}
\nc{\Hess}{{\rm Hess}}
\begin{document}

\title{Bi-Lipschitz approximation by finite-dimensional imbeddings}

\author{Karin Usadi Katz}

\author[M.~Katz]{Mikhail G. Katz$^{*}$}

\address{Department of Mathematics, Bar Ilan University, Ramat Gan
52900 Israel} \email{\{katzmik\}@macs.biu.ac.il (remove curly braces)}

\thanks{$^{*}$Supported by the Israel Science Foundation (grants
no.~84/03 and 1294/06) and the BSF (grant 2006393)}

\subjclass[2000]{Primary 
53C23;            %Global topological methods (a la Gromov)
Secondary 26E35 %non-standard analysis
}

\keywords{essential manifold, finite-dimensional approximation,
first-order logic, first variation formula, geodesic, Gromov's
inequality, hyperinteger, infinitesimal, injectivity radius,
Kuratowski imbedding, standard part, systole, transfer principle}

\date{\today}

\begin{abstract}
We show that the Kuratowski imbedding of a Riemannian manifold
in~$L^\infty$, exploited in Gromov's proof of the systolic inequality
for essential manifolds, admits an approximation by a
$(1+C)$--bi-Lipschitz (onto its image), finite-dimensional imbedding
for every~$C>0$.  Our key tool is the first variation formula thought
of as a real statement in first-order logic, in the context of
non-standard analysis.
\end{abstract}

\maketitle 

\tableofcontents

\section{Metric imbeddings and Gromov's theorem}

In '83, M. Gromov proved that the least length ({\em systole}, denoted
``sys'') of a non-contractible loop in a closed Riemannian
manifold~$M$ is bounded above in terms of the volume of~$M$, if~$M$
satisfies the topological hypothesis of being {\em essential} (for
instance, if~$M$ is aspherical).

A key technique in Gromov's seminal text \cite{Gr1} is the Kuratowski
imbedding.  Namely, Gromov imbeds a Riemannian manifold~$M$ into the
space
\[
L^\infty = L^\infty(M)
\]
of bounded Borel functions on~$M$.  Here a point~$x\in M$ is sent to
the function~$f_x$ defined by
\begin{equation}
f_x(y) = \dist(x,y) \quad \forall y\in X,
\end{equation}
where ``$\dist$'' is the Riemannian distance function in~$M$.  This
imbedding is strongly isometric, in the sense that the intrinsic
distance in~$M$ coincides with the ambient distance in~$L^\infty$
defined by the sup-norm.

The fact that the space~$L^\infty(M)$ is infinite-dimensional may have
given some readers of \cite{Gr1} the impression that
infinite-dimensionality of the imbedding is an essential aspect of
Gromov's proof of the systolic inequality for essential manifolds.  In
fact, this is not the case. Indeed, we can choose a
maximal~$\epsilon$-separated net~${\mathcal M} \subset M$
with~$|{\mathcal M}| < \infty$ points (by compactness of~$M$, every
infinite set would have an accumulation point, contradicting
$\epsilon$-separation).

Choose~$\epsilon$ satisfying~$\epsilon < \frac{1}{10} {\rm sys} (M)$.
Consider the reulting imbedding
\begin{equation}
\label{23}
M \to \ell^\infty({\mathcal M})
\end{equation}
by the distance functions from points of~${\mathcal M}$.  Then, for
the metric inherited from the imbedding, the systole goes down by a
factor at most~$5$, see \cite[p.~97]{SGT}.  Thus the systolic problem
can easily be reduced to finite-dimensional imbeddings.

In the present text, we show that, similarly, by choosing a
sufficiently fine~$\epsilon$-net, one can force the map \eqref{23} to
be~$(1+C)$--bi-Lipschitz onto its image, for all~$C>0$ (see
Theorem~\ref{NSA} below):

\begin{theorem}
\label{11}
Let~$M$ be a compact Riemannian manifold without boundary.  For every
$C>0$, there exists a~$(1+C)$--bi-Lipschitz finite-dimensional
imbedding of~$M$, approximating its isometric imbedding
in~$L^{\infty}(M)$.
\end{theorem}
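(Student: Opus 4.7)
The upper bound on the Lipschitz constant is automatic: for any net~$\mathcal{M}\subset M$, the reverse triangle inequality gives $|d(x,m) - d(y,m)| \leq d(x,y)$ for every $m$, so the imbedding $F(x) = (d(x,m))_{m\in \mathcal{M}}$ into $\ell^\infty(\mathcal{M})$ is $1$-Lipschitz. The content of the theorem is a matching \emph{lower} bound: one must produce, for every pair $x,y \in M$, a point $m \in \mathcal{M}$ with $|d(x,m) - d(y,m)|$ exceeding $(1+C)^{-1} d(x,y)$.

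The geometric idea driving the lower bound is the first variation formula. Given $x, y \in M$ close together, let $\sigma : [0,L] \to M$ be a unit-speed minimizing geodesic from $y$ to $x$, and extend it past $x$ by a small uniform amount $r > 0$; such an $r$ exists by compactness of $M$ together with a positive lower bound on the injectivity radius. For any $p = \sigma(L + s)$ with $0 < s \leq r$ on the extension (chosen not to enter the cut locus of $y$), one has the exact equality $d(y,p) = L + s$, so $|d(y,p) - d(x,p)| = L = d(x,y)$. If $m \in \mathcal{M}$ approximates $p$ up to an error $\epsilon$, the same expression equals $d(x,y)$ up to an error $2\epsilon$. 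For pairs with $d(x,y)$ bounded below by a fixed threshold $\rho$, one dispenses with the variation formula entirely: taking $m$ within $\epsilon$ of $x$ gives $|d(y,m) - d(x,m)| \geq d(x,y) - 2\epsilon$, which suffices once $\epsilon \ll \rho C$.

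The clean packaging, flagged by the paper's abstract, is the transfer principle. I would work in a non-standard extension $M^*$, take $\epsilon$ infinitesimal, and let $\mathcal{M}$ be a corresponding hyperfinite $\epsilon$-net. The first variation formula, a statement about smooth functions on $M$, transfers to $M^*$, and combined with the two-case analysis above it shows that the induced map $F$ has bi-Lipschitz distortion infinitesimally close to $1$ on every pair of standard points. Taking standard parts, or equivalently invoking overspill, then converts this infinitesimal statement into the standard statement of Theorem~\ref{11}: for every standard $C > 0$ there exists a (standard) finite $\epsilon$-net for which $F$ is $(1+C)$-bi-Lipschitz onto its image.

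The main obstacle I expect is uniformity of the first variation estimate. At a single point $x$ one obtains an asymptotic of the shape $d(y,m) - d(x,m) = d(x,y)\cos\theta + o(d(x,y))$, where $\theta$ is the angle between the initial direction from $x$ to $y$ and the direction from $x$ to $m$; for the transfer step to apply, the $o$-term must be controlled by a single modulus independent of $x$, $y$, and the direction. Compactness of the unit tangent bundle, continuity of the exponential map, and smoothness of the distance function off the cut locus are the ingredients that yield this uniformity. A related technicality is that for certain directions the extended geodesic meets the cut locus of $y$ prematurely, but the density of the net (or equivalently, the wealth of hyperfinite net points in every infinitesimal direction after transfer) permits a small perturbation that avoids this obstruction uniformly.
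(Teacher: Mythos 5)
Your overall architecture matches the paper's: finite $\epsilon$-nets, a net point roughly collinear with $x$ and $y$ but at the scale of the injectivity radius, the first variation formula, and the transfer principle. (The paper packages this as a proof by contradiction, extracting a convergent sequence $(x_n,v_n)$ on the unit tangent bundle and then working at a hyperinteger index $H$, but this is the same route.) One step in your write-up, however, does not go through as written. For close pairs you pass from the exact identity $d(y,p)-d(x,p)=d(x,y)$ to the nearby net point $m$ by the triangle inequality, obtaining $|d(y,m)-d(x,m)|\ge d(x,y)-2\epsilon$. This \emph{additive} error of $2\epsilon$ fails to yield the multiplicative bound exactly where you need it: $d(x,y)-2\epsilon \ge (1-C)\,d(x,y)$ forces $d(x,y)\ge 2\epsilon/C$, yet pairs closer than $2\epsilon/C$ always exist; and in the non-standard picture, the bad pair $(x_H,y_H)$ that transfer hands you is infinitely close, so $2\epsilon$ (with $\epsilon\approx 1/H$) need not be small relative to $d(x_H,y_H)$ at all. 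Your two-case split thus handles the easy regime twice and the hard one not at all.

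What saves the close-pair case is that the first variation formula gives a \emph{relative} error. Replacing $p$ by $m$ perturbs the angle at $x$ by an amount of order $\epsilon/\mathrm{InjRad}(M)$, independently of $d(x,y)$, so one obtains $|d(y,m)-d(x,m)| = d(x,y)\,(\cos\theta + o(1))$ with a correction that scales with $d(x,y)$. Your final paragraph does identify this as the crux --- you ask for a modulus on the $o(d(x,y))$-term uniform in the base point, direction, and separation, and name compactness of the unit tangent bundle and smoothness of the distance function off the cut locus as its source. That is precisely what the paper supplies: it defines the difference-quotient function $F$ on the real blow-up of $M\times M$ along the diagonal, proves its continuity in the region $d(x,z)\le\tfrac12\mathrm{InjRad}(M)$ (Proposition~\ref{41b}), and reads off the estimate from $F(x_H,y_H,a_H)\approx F((p,v),q)=1$. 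So your obstacle diagnosis and proposed ingredients are right, but the $2\epsilon$ paragraph is the wrong tool for close pairs and needs to be replaced by the relative first-variation estimate that your own final paragraph sketches.
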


Here a homeomorphism~$\phi$ is called~$K$--bi-Lipschitz if
\[
\dist(\phi(x),\phi(y))<K\dist(x,y)
\]
for all~$x,y$, and similarly for the inverse~$\phi^{-1}$.

It follows that finite-dimensional approximations work well for the
filling radius inequality, as well, namely the inequality relating the
filling radius of~$M$ and the volume of~$M$ (see~\cite{Gr1}).

The bi-Lipschitz property was discussed in \cite[p.~115]{Gu2}, where a
``sketch'' of a proof concludes as follows: ``Finally, we can
generalize the trigonometry argument to almost flat manifolds using
the Toponogov comparison theorem''.  In fact, we will see that both
``almost flatness'' and ``Toponogov's theorem'' miss the mark
somewhat, as the relevant ingredient in the proof is the first
variation formula, which can be applied in the absence of curvature
hypotheses, and does not require the difficult (albeit classical)
result of Toponogov.  (Similarly, even in the flat case, the argument
sketched in \cite{Gu2} may contain a gap in the case when, in the
notation of \cite{Gu2}, the pair~$x,y$ are much closer than the scale
of the~$\delta$-net, as even a quadratic estimate
on~$d(x,x_i)-d(y,x_i)$ may still be greater than~$d(x,y)$.)

Our method of proof involves the following technique.  We use the {\em
tranfer principle\/} of non-standard analysis (see Section~\ref{ten},
item \ref{101}) to conclude that the first variation formula
\eqref{22} must apply also to the non-standard line through a pair of
infinitely close hyperreal points.  
%We evaluate the derivative in the left hand side of \eqref{22} using
%such a pair, avoiding tedious subdivision of epsilons and standard
%uniform estimates.
The main idea is to view the first variatiom formula from differential
geometry, as a statement in first-order logic.

Note that such concepts as the injectivity radius and the first
variation formula can be formulated in first order logic.  This is
essential for our argument, since the transfer principle allows one to
conclude that real statements are true over~$\R^*$ just as they are
true over~$\R$, only if such statements are in first-order logic,
i.e.~quantification over elements is allowed, quantification over sets
or sequences is not allowed.  

The finite-dimensional approximation is used in an analytic proof of
Gromov's systolic inequality in \cite{AK}.

Section~\ref{two} reviews the basic differential geometric notions
used in our proof.  Section~\ref{three} defines the ingredients of the
proof of our approximation result.  Section~\ref{blowup} discusses the
real blow-up of~$M\times M$ along the diagonal, used in the proof of
the main Theorem~\ref{11}.  Section~\ref{four} contains the hyperreal
part of the proof, which starts with a choice of a {\em
hyperinteger\/} (see Section~\ref{ten}, item~\ref{108}).
Section~\ref{ten} outlines the basic principles of non-standard
analysis.

\section{Geodesic equation, injectivity radius, and first variation}
\label{two}

A smooth curve~$\alpha(s)$ in a complete~$n$-dimensional manifold~$M$
is a geodesic if for each~$k=1,2, \ldots, n$, we have in coordinates
\begin{equation}
\label{38proto}
(\alpha^k)^{''} + \Gamma^k_{ij}(\alpha^i)^{'}(\alpha^j)^{'}
 = 0 \quad \hbox{where}
\quad ^{'} = {d \over ds}\ ,
\end{equation}
meaning that
\[
(\forall k) \quad {d^2 \alpha^k \over ds^2} + \Gamma^k_{ij} {d\alpha^i
\over ds} {d\alpha^j \over ds} = 0,
\]
The symbols~$\Gamma_{ij}^k$ can be expressed in terms of the first
fundamental form and its derivatives as follows :
$$\Gamma^k_{ij} = {1 \over 2}(g_{i\ell;j} - g_{ij;\ell} + g_{j\ell;i})
g^{\ell k},$$ where~$g^{ij}$ is the inverse matrix of~$g_{ij}$.
Denote by
\begin{equation}
\label{geo}
\gamma(s)= \gamma(p,v,s)
\end{equation}
the geodesic starting at~$p=\gamma(0)$, with initial
vector~$v=\gamma'(0)$.  We have a well-known homogeneity property
\[
\gamma(x,tv,s) = \gamma(x,v,ts)
\]
for all real~$t$.  We define the exponential map 
\[
\exp_p: T_p M \to M
\]
by~$v \mapsto \gamma(p,v,1)$.

The injectivity radius~${\rm InjRad}_p(M)$ of~$M$ at~$p$ is the
supremum of all~$r$ such that the exponential map is injective on a
ball of radius~$r$ centered at the origin of~$T_p M$.  The global
injectivity radius of~$M$ is defined by minimizing~${\rm InjRad}_p(M)$
over~$p$.

The formula relating the following pair of metric quantities:
\begin{enumerate}
\item
the distance~$u(s)$ from a point~$q\in M$ to~$\gamma(p,v,s)$
(where~$v$ is a unit vector), realized by a geodesic joining them
(which is assumed to be minimizing);
\item
the angle~$\alpha$ at~$p$ formed by the two geodesics,
\end{enumerate}
is called the first variation formula:
\begin{equation}
\label{22}
u'(0) = - \cos \alpha.
\end{equation}

\section{Approximation by finite-dimensional imbeddings}
\label{three}

\begin{theorem}
\label{NSA}
Let~$M$ be a compact Riemannian manifold without boundary.  For every
$C>0$, there exists a~$(1+C)$--bi-Lipschitz finite-dimensional
imbedding of~$M$, approximating its isometric imbedding
in~$L^{\infty}(M)$.
\end{theorem}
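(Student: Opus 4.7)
The plan is to use as imbedding the map $f \colon M \to \ell^\infty(\mathcal{M})$ sending $x \mapsto (\dist(x,m))_{m \in \mathcal{M}}$, where $\mathcal{M}$ is a sufficiently fine $\epsilon$-net in $M$, with scale $\epsilon = 1/N$ for a hyperinteger $N$ (Section~\ref{ten}); this makes $\epsilon$ infinitesimal and $\mathcal{M}$ hyperfinite in $M^*$. One side of the bi-Lipschitz inequality is automatic from the triangle inequality: $|\dist(x,m) - \dist(y,m)| \le \dist(x,y)$ implies that $f$ is $1$-Lipschitz. The work lies in exhibiting, for each pair $x \ne y$, a net point $m \in \mathcal{M}$ realizing $|\dist(x,m) - \dist(y,m)| \ge (1+C)^{-1}\,\dist(x,y)$.

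The candidate $m$ is produced by extending the minimizing geodesic $\gamma$ from $x$ through $y$ beyond $y$ to a point $q$, chosen at a distance from $y$ short of the injectivity radius, so that $\gamma$ remains minimizing on $[x,q]$ and $\dist(x,q) - \dist(y,q) = \dist(x,y)$ exactly. Choose any $m \in \mathcal{M}$ with $\dist(m,q) \le \epsilon$, and let $\beta$ be the geodesic from $q$ to $m$. The first variation formula \eqref{22}, applied at $q$, gives the derivatives of $\dist(x,\cdot)$ and $\dist(y,\cdot)$ along $\beta$ as $-\cos\alpha_x$ and $-\cos\alpha_y$, where $\alpha_x$, $\alpha_y$ are the angles at $q$ between $\beta'(0)$ and the tangents to the geodesics from $q$ to $x$ and to $y$. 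Since both $x$ and $y$ lie on $\gamma$ on the same side of $q$, those two tangents coincide, forcing $\alpha_x = \alpha_y$ and causing the first-order corrections to cancel, so $|\dist(x,m) - \dist(y,m) - \dist(x,y)|$ is of smaller order than $\dist(q,m) \le \epsilon$.

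The non-standard analysis is what promotes this pointwise estimate into a uniform $(1+C)$-bi-Lipschitz bound. The first variation formula, together with the existence of short minimizing geodesics and the definition of the injectivity radius, is expressible in first-order logic, so it transfers to $M^*$; applied to a hyperreal pair $(x,y)$ together with the hyperfinite net $\mathcal{M}$ at infinitesimal scale, it produces a net point $m$ for which $|\dist(x,m) - \dist(y,m)|/\dist(x,y)$ is infinitely close to $1$. Taking standard parts, or equivalently overspilling back to the standard universe, yields the desired $(1+C)$-bi-Lipschitz bound for a standard finite $\epsilon$-net. The main obstacle, and the precise reason the naive $\epsilon$-net estimate (as in the sketch attributed to Guth) is not sufficient, is pairs $(x,y)$ whose separation $\dist(x,y)$ is comparable to $\epsilon$: here the first-order correction from the first variation formula is not a priori negligible relative to $\dist(x,y)$. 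Passing to the real blow-up of $M \times M$ along the diagonal (Section~\ref{blowup}) circumvents this by assigning even infinitely close pairs a well-defined standard direction, which is what permits the coherent application of the first variation formula and the cancellation $\alpha_x = \alpha_y$ in that limit.
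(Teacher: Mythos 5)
Your framework matches the paper's: maximal $\epsilon$-nets, passage to a hyperinteger scale, the first variation formula transferred to $M^*$, and the real blow-up of $M\times M$ along the diagonal to tame infinitely close pairs. But the geometry of the key estimate is genuinely different, and this is where the gap lies. The paper evaluates the first variation at the near endpoint $x_n$ of the geodesic $\gamma_n$: it looks at $u_n(s)=d(\gamma_n(s),a_n)$, shows the angle $\alpha_n=\angle a_n x_n y_n$ is small, and then bundles the whole difference quotient into the single function $F(x,y,z)=|d(x,z)-d(y,z)|/d(x,y)$ whose \emph{continuity on the blow-up} (Proposition~\ref{41b}) is what carries all the quantitative weight: $F(x_H,y_H,a_H)\approx F((p,v),q)=1$ follows purely from the fact that $F$ is a standard continuous function evaluated at infinitely close arguments. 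You instead evaluate at the far point $q$, get the pleasant first-order cancellation $\alpha_x=\alpha_y$, and conclude that the additive error $|d(x,m)-d(y,m)-d(x,y)|$ is $o(\epsilon)$.

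That conclusion is not enough, and this is a real gap rather than a cosmetic one. The bi-Lipschitz bound requires the error to be small \emph{relative to $d(x,y)$}, not relative to $\epsilon$. Your Taylor estimate gives $g(\epsilon)=d(x,y)+O(\epsilon^2)$, which is useless once $d(x,y)\lesssim \epsilon^2$ --- and nothing prevents the offending pair from being that close. The fix exists: one must observe that $g''$ is itself $O(d(x,y))$, because the second derivatives of $d(x,\cdot)$ and $d(y,\cdot)$ along $\beta$ are $C^1$ in the base point away from the cut locus, so the error is in fact $O(d(x,y)\,\epsilon^2)$, and the ratio tends to $1$ uniformly. You do not make this observation; you flag the difficulty and then wave at the blow-up, but saying that the blow-up ``assigns a well-defined standard direction'' does not by itself produce the missing factor of $d(x,y)$ in the error bound. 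The paper's Proposition~\ref{41b} is precisely the statement that packages this estimate as continuity of $F$ and is invoked before any infinitesimal difference quotient is formed; indeed the paper explicitly warns (after \eqref{54}) that one \emph{cannot} pass from the derivative $u'_H(0)$ to the difference quotient $\Delta u_H/\Delta s$ directly, since $u_H$ is merely internal --- a subtlety your direct Taylor argument at $q$ equally faces but does not address. A secondary, smaller difference: the paper argues by contradiction via a convergent subsequence of bad pairs, whereas you argue directly and then appeal to underspill; that part is fine, but you should phrase the internal statement with a standard tolerance $\delta$ before underspilling, since ``$\approx 1$'' is not internal.
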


\begin{proof}
For each~$n\in {\mathbb N}$, choose a maximal~$\frac{1}{n}$-separated
net
\[
{\mathcal M}_n\subset M,
\]
and imbed the manifold~$M$ in~$\ell^\infty$ by the collection of
distance functions from the points in the net, namely, by a map
\begin{equation}
\label{42}
\iota_n : M \to \ell^\infty({\mathcal M}_n).
\end{equation}
If there exists a real~$C>0$ such that the imbedding is
not~$(1-C)$--bi-Lipschitz, then there is a pair of points~$x_n,y_n\in
M$ such that the distance~$d(x_n, y_n)$ satisfies
\begin{equation}
\label{911}
| \iota_n(x) - \iota_n(y) | \leq (1-C) d(x_n, y_n),
\end{equation}
meaning that
\begin{equation}
\label{43b}
| d(x_n, z_n) - d(y_n, z_n) | \leq (1-C) d(x_n, y_n)
\end{equation}
for every~$z_n \in {\mathcal M}_n$.  Let~$\gamma_n(s)$ be the geodesic
parametrized by arclength starting at~$x_n=\gamma_n(0)$, passing
through~$y_n$.  Let~$q_n= \gamma_n(b)$ where
\[
b= \frac{1}{2}{\rm InjRad}(M).
\]
Let~$a_n \in {\mathcal M}_n$ be a point of the maximal net nearest
to~$q_n$.  Let~$\alpha_n$ be the angle at~$x_n$:
\[
\alpha_n = \angle a_n x_n y_n.
\]
The idea is to show that choosing a sufficiently fine net will force
the angle to be small.  Define a function~$u_n= u_n(s)$ by setting
\[
u_n(s) = d(\gamma_n(s), a_n).
\]
Then we have the first variation formula
\begin{equation}
\label{fv}
u'_n(0) = - \cos \alpha_n.
\end{equation}
Let also
\[
v_n=\gamma_n'(0) \in T_{x_n}M
\]
be its initial vector, for which we will use the briefer
notation~$(x_n, v_n)$.

Thus we obtain a sequence of finite-dimensional imbeddings~$\iota_n$
as in~\eqref{42}.  We will argue by contradiction.  Suppose for
each~$n$ we can find a pair~$(x_n, y_n)$ satisfying \eqref{911}.  We
assume without loss of generality that~$d(x_n, y_n)$ is smaller than
the injectivity radius of~$M$.  By the compactness of the unit tangent
sphere bundle of~$M$, we can replace the sequence~$(x_n, v_n), n\in
{\mathbb N}$ by a convergent subsequence.  Let
\[
(p,v)= \lim_{n\to \infty} (x_n, v_n),
\]
and let~$\gamma(t)$ be the unique geodesic with initial data~$(p,v)$.
Let~$q=\gamma(b)$, where~$b =\frac{1}{2} {\rm InjRad}(M)$, as in
Figure~\ref{NSAfigure}.  The proof is completed by a hyperreal
technique in Section~\ref{four}. 
\end{proof}

\section{Real blow-up along the diagonal}
\label{blowup}

To handle a technical point in the proof of Theorem~\ref{NSA}, will
will need the following auxiliary construction.  Consider the product
manifold~$M^{\times 2}= M \times M$, and the diagonal~$D\subset
M^{\times 2}$.  We consider the real blow-up~$\hat M_D^{\times 2}$
of~$M^{\times 2}$ along~$D$:
\[ 
\beta: \hat M_D^{\times 2} \to M^{\times 2}.
\]
Here the inverse image of a point~$(x,x) \in D\subset M^{\times 2}$
under the map~$\beta$ is a copy of~$\RP^{n-1}$, thought of as the
collection of lines~$\ell$ orthogonal to~$D\subset M^{\times 2}$ at
the point~$(x,x) \in M^{\times 2}$.  Projecting to the second
component in~$M\times M$, one can think of~$\ell$ as a line in~$M$
passing through~$x\in M$.

We define a function 
\[
F: \hat{M}_D^{\times{2}}\times M \to \R
\]
on the product~$\hat{M}_D^{\times{2}}\times M$ as follows.  Away from
the diagonal~$D$, a point in~$\hat{M}_D^{\times{2}}\times M$ is
represented by a triple~$(x,y,z)$ of points of the manifold~$M$
itself, and we define~$f$ by setting
\[
F(x,y,z)= \frac{|d(x,z)-d(y,z)|}{d(x,y)}.
\]
For points of the form
\[
(x,v)\in \beta^{-1}(D), \quad D \subset M^{\times 2},
\]
where the unit vector~$v$ is tangent to a line~$\ell$ through~$x$, we
set
\[
F((x,v),z)= |u'(0)|,
\]
where~$u(s)=d(\gamma(s),z)$, and~$\gamma(s)=\gamma(x,v,s)$ is the
geodesic satisfying~$\gamma(0)=x$ and~$\gamma'(0)=v$ (see
\eqref{geo}).  In particular, we have
\begin{equation}
\label{41c}
F((x,v),z)= 1
\end{equation}
if $z$ lies on a minimizing geodesic $\gamma(x,v,s)$.
\begin{proposition}
\label{41b}
The function~$F$ is continuous in the region defined by~$d(x,z) \leq
\frac{1}{2}{\rm InjRad} M$.
\end{proposition}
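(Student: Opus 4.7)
The plan is to handle continuity separately on the open complement of the exceptional divisor and at points of the divisor. Off $\beta^{-1}(D)$, a point is represented by a genuine triple $(x,y,z)$ with $x\ne y$, and $F(x,y,z)=|d(x,z)-d(y,z)|/d(x,y)$ is a ratio of continuous functions whose denominator does not vanish; so continuity on that open set is immediate from the continuity of the Riemannian distance.

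The substance of the argument is at a boundary point $((x,v),z)\in \beta^{-1}(D)\times M$ with $d(x,z)\le \tfrac12 {\rm InjRad}(M)$. First I would parametrize $\hat M_D^{\times 2}$ near $(x,v)$ by $(x',v',r)\mapsto (x',\exp_{x'}(rv'))$, with the exceptional divisor corresponding to $r=0$. Then convergence $((x_n,y_n),z_n)\to ((x,v),z)$ translates into $x_n\to x$, $z_n\to z$, $r_n:=d(x_n,y_n)\to 0$, and $v_n:=\exp_{x_n}^{-1}(y_n)/r_n\to v$ in the unit tangent sphere bundle. Since $d(x,z)<{\rm InjRad}(M)$, for $n$ large $z_n$ is joined to $x_n$ by a unique minimizing geodesic, whose initial unit vector $w_n\in T_{x_n}M$ converges to the analogous $w\in T_xM$. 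Hence the angle $\alpha_n=\angle(v_n,w_n)$ converges to $\alpha=\angle(v,w)$.

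Now put $\gamma_n(s)=\gamma(x_n,v_n,s)$ and $u_n(s)=d(\gamma_n(s),z_n)$. The first variation formula \eqref{22} gives $u_n'(0)=-\cos\alpha_n$, and Taylor expanding along the geodesic yields
\[
d(y_n,z_n)-d(x_n,z_n)=u_n(r_n)-u_n(0)=-r_n\cos\alpha_n+o(r_n),
\]
so that
\[
F((x_n,y_n),z_n)=\frac{|u_n(r_n)-u_n(0)|}{r_n}=|\cos\alpha_n|+o(1)\longrightarrow |\cos\alpha|=|u'(0)|=F((x,v),z),
\]
which is the desired continuity at the boundary point.

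The main obstacle is to justify the $o(r_n)$ remainder \emph{uniformly} in $(x_n,v_n,z_n)$ on a neighborhood of $(x,v,z)$. For this I would invoke the smooth dependence of the exponential map on its initial conditions together with the fact that, on the region $d(\cdot,\cdot)<{\rm InjRad}(M)$, the squared distance to a fixed point is smooth in its other argument; this delivers a second-order Taylor estimate for $u_n(s)$ with remainder bounded uniformly on a compact parameter neighborhood. With that uniformity in hand, the limit above is valid along every sequence approaching $((x,v),z)$, yielding continuity of $F$ on the specified region.
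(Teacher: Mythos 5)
Your argument follows the same skeleton as the paper's — the angle $\alpha_n$ converges to $\alpha$ by smooth dependence of the exponential map, and the first variation formula then identifies the limiting value of $F$ — but you explicitly supply a step the paper compresses. The paper's proof only observes that the angles converge and declares continuity of $F$ to follow ``in view of the first variation formula.'' That formula gives convergence of the \emph{derivatives} $u_n'(0)=-\cos\alpha_n$; but the value of $F$ away from the exceptional divisor is a \emph{difference quotient}, not a derivative. To deduce
\[
\frac{u_n(r_n)-u_n(0)}{r_n}\longrightarrow u'(0)
\]
as $(x_n,y_n,z_n)\to ((p,v),z)$ with $r_n=d(x_n,y_n)\to 0$, one needs exactly what you provide: a second-order Taylor estimate with remainder $o(r_n)$ uniform over a compact parameter neighborhood, obtained from smoothness of the squared distance on $\{0<d<\mathrm{InjRad}\}$. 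So yours is not a different route so much as the complete version of the paper's sketch.

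One remark applying to both your argument and the proposition as stated: the region $d(x,z)\le \frac12\mathrm{InjRad}\, M$ includes $d(x,z)=0$, but there $F((x,v),x)$ is not even well defined ($u(s)=|s|$ near $s=0$, so $u'(0)$ does not exist), and $F$ admits no continuous extension: triples $(x',y',z')$ with $z'$ the geodesic midpoint of $x'$ and $y'$ give $F\to 0$ as they approach $((p,v),p)$, while triples with $z'$ on the geodesic beyond $y'$ give $F\to 1$. Your uniform Taylor remainder requires $u_n(0)$ bounded away from $0$, so it correctly proves continuity only on $\{0<d(x,z)\le\frac12\mathrm{InjRad}\}$ — which is all Section~4 of the paper actually uses, since there $d(x_H,a_H)\approx \frac12\mathrm{InjRad}$.
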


\begin{proof}
Let~$(x_n,v_n)$ be a sequence converging to~$(p,v)$.  In view of the
first variation formula, to prove the continuity of~$F$, it suffices
to show that the angle~$\alpha_n$ converges to~$\alpha$, the angle
formed at~$p$ by~$v$ and~$\gamma'(0)$.  This is immediate from the
fact that the exponential map
\[
\exp_z: T_z M \to M
\]
at the point~$z\in M$ is a diffeomorphism onto its image around~$p$.
\end{proof}

\section{Choice of hyperinteger}
\label{four}

\begin{figure}%[ht]
\includegraphics[height=2.7in]{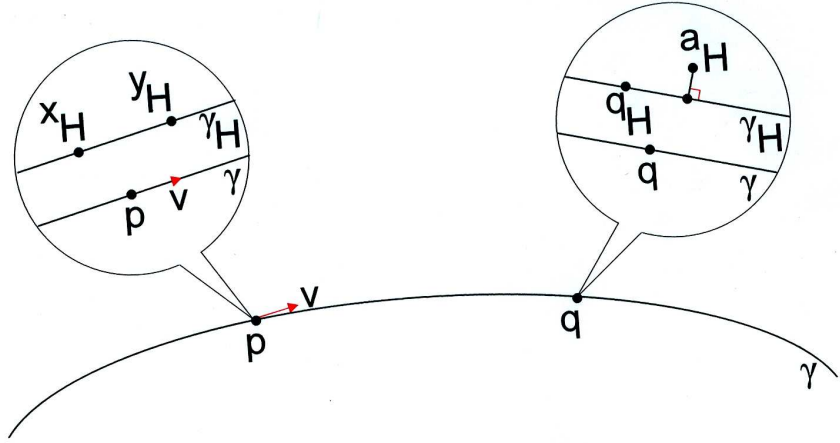}
\caption{Microscopic images of a pair of infinitely close lines
$\gamma$ and~$\gamma_H$ on~$M^*$}
\label{NSAfigure}
\end{figure}

We continue with the proof by contradiction of Theorem~\ref{NSA}.
Let~$H$ be an infinite Robinson hyperinteger (see Section~\ref{ten},
item~\ref{108}).  Then the sequence~$({\mathcal M}_n)$ is defined for
the value~$H$ of the index, by the extension principle (see
Section~\ref{ten}, item~\ref{101}).  Note that by compactness of~$M$,
we have
\begin{equation}
\label{41}
M=\st({\mathcal M}_H),
\end{equation}
where ``st'' is the standard part function (see Section~\ref{ten},
item~\ref{103}).  Since the relation~\eqref{911} is satisfied at all
finite values of the index~$n$, it is satisfied at the value~$H$, as
well, by the transfer principle (see Section~\ref{ten},
item~\ref{102}).  The points~$x_H$ and~$y_H$ are infinitely close
to~$p\in M$.  The geodesic~$\gamma_H$ passes through both~$x_H$
and~$y_H$ by construction, and is infinitely close to the limiting
geodesic~$\gamma$.  By the transfer principle,
\[
d(a_H, q_H) < \frac{1}{H}.
\]
Equation \eqref{911} yields
\begin{equation}
\label{911H}
| \iota_H(x) - \iota_H(y) | \leq (1-C) d(x_H, y_H).
\end{equation}
Let~$\Delta s = d(x_H, y_H)$, so that~$\gamma_H(\Delta s) = y_H$.
Just as for a finite value of the index, we have
\[
\alpha_H = \angle a_H x_H y_H.
\]
The point~$(x_H,y_H)$ is infinitely close to the point~$(p,v)\in \hat
M_D^{\times 2}$ of the blow-up constructed in Section~\ref{blowup}.
By Proposition~\ref{41b}, the function~$F$ is continuous.
Since~$(x_H,y_H,a_H)\approx ((p,v),q)$, we have
\begin{equation}
\label{53} 
F(x_H,y_H,a_H)\approx F((p,v),q).
\end{equation}
Therefore
\[
\left| \frac{\Delta u_H}{\Delta s} \right| = F(x_H,y_H,a_H)\approx
F((p,v),q) = 1,
\]
and therefore
\begin{equation}
\label{55}
\frac{\Delta u_H}{\Delta s} \approx -1.
\end{equation}
Note that by the transfer principle and the first variation
\eqref{fv}, we obtain
\begin{equation}
\label{54}
u_H'(0)= -\cos \alpha_H,
\end{equation}
but \eqref{55} is not immediate from \eqref{54}, as the function $u_H$
is only internal rather than standard, so that one cannot
apply~\eqref{approx} directly.  Equation~\eqref{55} is equivalent to
\[
\frac{d(\gamma_H(\Delta s), a_H) - d(x_H, a_H)}{\Delta s} \approx - 1
\]
or
\[
\frac{d(y_H, a_H) - d(x_H, a_H)}{d(x_H, y_H)} \approx - 1.
\]
Thus an application of the standard part function ``st'' (see
Section~\ref{ten}, item~\ref{103}) yields
\[
{\rm st} \left( \frac{d(x_H, a_H) - d(y_H, a_H)}{d(x_H, y_H)}
\right)=1,
\]
contradicting \eqref{911H}.  The resulting contradiction proves that
some finite-dimensional imbedding will necesarily
be~$(1-C)$--bi-Lipschitz, completing the proof of Theorem~\ref{NSA}.

\section{A non-standard glossary}
\label{ten}

The present section is included mainly for the benefit of the reader
not yet familiar with the general framework of non-standard analysis.
The section can be omitted, shortened, or retained as is, as per
recommendation of the referee.  

A popular introduction to the subject may be found in \cite{St},
chapter~6: ``Ghosts of departed quantities''.

In this section we present some illustrative terms and facts from
non-standard calculus \cite{Ke}.  The relation of being infinitely
close is denoted by the symbol~$\approx$.  Thus,~$x\approx y$ if and
only if~$x-y$ is infinitesimal.

\subsection{Natural hyperreal extension~$f^*$}
\label{101}

The construction of the hyperreals is carried out in the framework of
the standard axiomatisation of set theory, denoted ZFC.  Here ZFC
stands for the axiom system of Zermelo and Fraenkel, with the addition
of the Axiom of Choice.

The {\em extension principle\/} of non-standard calculus states that
every real function~$f$ has a hyperreal extension, denoted~$f^*$ and
called the natural extension of~$f$.  The {\em transfer principle\/}
of non-standard calculus asserts that every real statement true
for~$f$, is true also for~$f^*$.  For example, if~$f(x)>0$ for every
real~$x$ in its domain~$I$, then~$f^*(x)>0$ for every hyperreal~$x$ in
its domain~$I^*$.  Note that if the interval~$I$ is unbounded,
then~$I^*$ necessarily contains infinite hyperreals.  We will
typically drop the star~$^*$ so as not to overburden the notation.

\subsection{Internal set}
\label{102}
Internal set is the key tool in formulating the transfer principle,
which concerns the logical relation between the properties of the real
numbers~$\R$, and the properties of a larger field denoted
\[
\R^*
\]
called the {\em hyperreal line}.  The field~$\R^*$ includes, in
particular, infinitesimal (``infinitely small") numbers, providing a
rigorous mathematical realisation of a project initiated by Leibniz.
Roughly speaking, the idea is to express analysis over~$\R$ in a
suitable language of mathematical logic, and then point out that this
language applies equally well to~$\R^*$.  This turns out to be
possible because at the set-theoretic level, the propositions in such
a language are interpreted to apply only to internal sets rather than
to all sets.  Note that the term ``language" is used in a loose sense
in the above.  A more precise term is {\em theory in first-order
logic}.  Here a statement in first order logic by definition involves
quantification only over elements (quantification over sets or
sequences is not allowed).

Internal sets include natural extension of standard sets.

\subsection{Standard part function}
\label{103}
The standard part function ``st" is the key ingredient in Abraham
Robinson's resolution of the paradox of Leibniz's definition of the
derivative as the ratio of two infinitesimals
\[
\frac{dy}{dx}.
\]
The standard part function associates to a finite hyperreal
number~$x$, the standard real~$x_0$ infinitely close to it, so that we
can write
\begin{equation*}
\mathrm{st}(x)=x_0.
\end{equation*}
In other words, ``st'' strips away the infinitesimal part to produce
the standard real in the cluster.  The standard part function ``st" is
not defined by an internal set (see item~\ref{102} above) in
Robinson's theory.

\subsection{Cluster}
\label{104}
Each standard real is accompanied by a cluster of hyperreals
infinitely close to it.  The standard part function collapses the
entire cluster back to the standard real contained in it.  The cluster
of the real number~$0$ consists precisely of all the infinitesimals.
Every infinite hyperreal decomposes as a triple sum
\[
H+r+\epsilon,
\]
where~$H$ is a hyperinteger (see item~\ref{108} below), while~$r$ is a
real number in~$[0,1)$, and~$\epsilon$ is infinitesimal.
Varying~$\epsilon$ over all infinitesimals, one obtains the cluster
of~$H+r$.

\subsection{Derivative}
\label{105}
To define the derivative of~$f$ in this approach, one no longer needs
an infinite limiting process as in standard calculus.  Instead, one
sets
\begin{equation}
\label{deri}
f'(x) = \mathrm{st} \left( \frac{f(x+\epsilon)-f(x)}{\epsilon}
\right),
\end{equation}
where~$\epsilon$ is infinitesimal, yielding the standard real number
in the cluster of the hyperreal argument of ``st''.  Here the
derivative exists if and only if the value~\eqref{deri} is independent
of the choice of the infinitesimal.    Note that
\begin{equation}
\label{approx}
f'(x) \approx \frac{f(x+\epsilon)-f(x)}{\epsilon}.
\end{equation}
The addition of ``st'' to formula~\eqref{deri} resolves the
centuries-old paradox famously criticized by George Berkeley \cite{Be}
(in terms of the {\em Ghosts of departed quantities},
cf.~\cite[Chapter~6]{St}), and provides a rigorous basis for the
calculus.

\subsection{Continuity}
\label{106}
A function~$f$ is continuous at~$x$ if the following condition is
satisfied:~$y\approx x$ implies~$f(y)\approx f(x)$.

\subsection{Uniform continuity} 
\label{107}
A function~$f$ is uniformly continuous on~$I$ if the following
condition is satisfied:

\begin{itemize}
\item
standard: for every~$\epsilon>0$ there exists a~$\delta>0$ such that
for all~$x\in I$ and for all~$y\in I$, if~$|x-y|<\delta$ then
$|f(x)-f(y)| < \epsilon$.
\item
non-standard: for all~$x\in I^*$, if~$x\approx y$ then~$f(x) \approx
f(y)$.
\end{itemize}

%See Remark~\ref{51b} for a more detailed discussion.

\subsection{Hyperinteger}
\label{108}
A hyperreal number~$H$ equal to its own integer part 
\[
H = [H]
\]
is called a hyperinteger (here the integer part function is the
natural extension of the real one).  The elements of the complement
$\Z^* \setminus \Z$ are called infinite hyperintegers.

\subsection{Proof of extreme value theorem}
\label{109}
Let~$H$ be an infinite hyperinteger.  The interval~$[0,1]$ has a
natural hyperreal extension.  Consider its partition into~$H$
subintervals of equal length~$\frac{1}{H}$, with partition points~$x_i
= i/H$ as~$i$ runs from~$0$ to~$H$.  Note that in the standard
setting, with~$n$ in place of~$H$, a point with the maximal value
of~$f$ can always be chosen among the~$n+1$ partition points~$x_i$, by
induction.  Hence, by the transfer principle, there is a
hyperinteger~$i_0$ such that~$0\leq i_0 \leq H$ and
\begin{equation}
\label{101b}
f(x_{i_0})\geq f(x_i) \quad \forall i= 0,...,H.
\end{equation}
Consider the real point
\begin{equation*}
c= {\rm st}(x_{i_0}).
\end{equation*}
An arbitrary real point~$x$ lies in a suitable sub-interval of the
partition, namely~$x\in [x_{i-1},x_i]$, so that~${\rm st}(x_i) = x$.
Applying ``st'' to the inequality \eqref{101b}, we obtain by
continuity of~$f$ that~$f(c)\geq f(x)$, for all real~$x$, proving~$c$
to be a maximum of~$f$ (see \cite[p.~164]{Ke} and \cite[Chapter~12,
p.~324]{Eb}).

\subsection{Limit}
\label{1010}
We have~$\lim_{x\to a} f(x) = L$ if and only if whenever the
difference~$x-a$ is infinitesimal, the difference~$f(x)-L$ is
infinitesimal, as well, or in formulas: if~${\rm st}(x)=a$ then~${\rm
st}(f(x)) = L$.

Given a sequence of real numbers~$\{x_n|n\in \mathbb{N}\}$, if~$L\in
\mathbb{R}\;$ we say~$L$ is the limit of the sequence and write~$L =
\lim_{n \to \infty} x_n$ if the following condition is satisfied:
\begin{equation}
\label{102b}
{\rm st} (x_H)=L \quad \mbox{\rm for all infinite } H
\end{equation}
(here the extension principle is used to define~$x_n$ for every
infinite value of the index).  This definition has no quantifier
alternations.  The standard~$(\epsilon, \delta)$-definition of limit,
on the other hand, does have quantifier alternations:
\begin{equation}
\label{disaster}
L = \lim_{n \to \infty} x_n\Longleftrightarrow \forall \epsilon>0\;,
\exists N \in \mathbb{N}\;, \forall n \in \mathbb{N} : n >N \implies
d(x_n,L)<\epsilon.
\end{equation}

\section*{Acknowledgment}

We are grateful to H. J. Keisler for checking an earlier version of
the non-standard argument and pointing out a gap.

\end{document}